\newtheorem{theorem}{Theorem}[section]
\newtheorem{proposition}[theorem]{Proposition}
\newtheorem{lemma}[theorem]{Lemma}
\newtheorem{corollary}[theorem]{Corollary}
\newtheorem*{theorem*}{Theorem}
\newtheorem{example}[theorem]{Example}
\newtheorem{definition}[theorem]{Definition}
\numberwithin{equation}{section}
\newcommand{\invlim}{\underleftarrow{\lim}}
\begin{document}

\title[Hereditarily Decomposable Inverse Limit and Periodicity]{A Hereditarily Decomposable Generalized Inverse Limit from a Function on [0,1] with cycles of all periods}

\author{Tavish J. Dunn and David J. Ryden}

\address{Baylor University \\ Waco, Texas 76798-7328}

\email{tavish\_dunn@baylor.edu, david\_ryden@baylor.edu}

\subjclass[2020]{}

\keywords{continuum, inverse limit, periodicity, indecomposability, intermediate value property, hereditarily decomposable}

\begin{abstract}

In this paper, we consider inverse limits of $[0,1]$ using upper semicontinuous set-valued functions.  We aim to expand on a previous paper exploring the relationship between the existence periodic points of a continuous function to the existence of indecomposable subcontinua of the corresponding inverse limit. In a previous paper, sufficient conditions were given such that if a satisfactory bonding map $F$ had a periodic cycle of period not a power of 2, then $\invlim\{[0,1],F\}$ contains an indecomposable continuum. We show that the condition that $F$ is almost nonfissile is sharp by constructing an upper semicontinuous, surjective map $F$ that has the intermediate value property and periodic cycles of every period, yet produces a hereditarily decomposable inverse limit.


\end{abstract}

\maketitle

\section{Introduction}

Inverse limits have been a connection point between the studies of dynamics and continuum theory for decades, where the dynamical properties of a map induce topological properties of the corresponding inverse limit and vice-versa. When William Mahavier and W.T.~Ingram introduced inverse limits with upper semicontinuous set-valued bonding functions \cite{IngramMahavier}, \cite{Mahavier}, much study began regarding the conditions under which results in the classical setting could be generalized to the set-valued setting.

In the 1980s, Marcy Barge and Joe Martin discerned for a map $f:[0,1]\rightarrow [0,1]$ the relationship of its dynamics to the dynamics of the shift map on its inverse limit \cite{BargeMartin1}, \cite{BargeMartin2} and to the topology of its inverse limit \cite{BargeMartin3}, \cite{BargeMartin4}. In particular, they show that a continuous function $f:[0,1]\rightarrow[0,1]$ with a periodic point of period not a power of 2 generates an inverse limit with an indecomposable subcontinuum. In this paper, we construct a family of examples that show this does not hold in general for upper-semicontinuous set-valued functions on $[0,1]$. However, we provide the following in \cite{DunnRyden2}, which allows us to generalize this result for certain bonding functions:

\begin{theorem}\label{PeriodicIndecomposable}
Let $f:[0,1]\rightarrow 2^{[0,1]}$ be upper semicontinuous, surjective, almost nonfissile, light, and have the intermediate value property, and $G(f)$ have empty interior. If $f$ has an orbit of period not a power of 2, then $\invlim\{[0,1],f\}$ contains an indecomposable subcontinuum.
\end{theorem}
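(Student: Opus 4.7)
The overall plan is to follow the Barge--Martin template while replacing each single-valued tool with its set-valued counterpart, leaning on the four technical hypotheses (IVP, light, almost nonfissile, empty-interior graph) at exactly the points where the classical argument breaks down.

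First I would invoke a set-valued Sharkovsky-type theorem for upper semicontinuous surjections with the intermediate value property to reduce to the case where $f$ has an orbit of odd period at least $3$: the IVP is what makes the interval-covering lemma (``if $F(I)$ covers $J$, then $F$ has a fixed point in $I$ projecting into $J$'') still hold set-valuedly, and lightness plus empty interior of $G(f)$ prevent a periodic orbit from being concealed inside a fat fiber. Second, from an odd-period orbit I would extract a \emph{\v{S}tefan cycle} and two closed intervals $I,J$ with one common endpoint such that for some iterate $k$, both $I$ and $J$ are covered by $f^k$ restricted to each of $I$ and $J$ in the horseshoe sense. This is the step that really uses almost nonfissility: in the classical proof one chases preimages along $f$ to build the covering intervals, and for USC $F$ the preimage of a point is a continuum rather than a point, so without some restriction on how $G(f)$ can split, these preimage continua can fan out in ways that destroy the interval structure. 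Almost nonfissile is precisely the combinatorial hypothesis that forbids this, and lightness keeps the chain of preimages one-dimensional.

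Third, once the horseshoe is in hand, I would produce the indecomposable subcontinuum by the standard symbolic dynamics construction: for each $\sigma=(s_1,s_2,\dots)\in\{I,J\}^{\mathbb{N}}$ let $K_\sigma$ be the set of threads $(x_0,x_1,\dots)\in\invlim\{[0,1],f^k\}$ with $x_i\in s_i$ for all $i$, and let $K=\bigcup_\sigma K_\sigma$. Each $K_\sigma$ is a nondegenerate subcontinuum of $\invlim\{[0,1],f^k\}\subseteq\invlim\{[0,1],f\}$, and the $K_\sigma$ form an uncountable family of pairwise disjoint subcontinua whose union is closed and connected. A standard argument then shows $K$ must be indecomposable: any proper subcontinuum meets at most countably many $K_\sigma$, so two different decomposition pieces would have to share uncountably many $K_\sigma$, a contradiction.

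The main obstacle is the middle step: proving that, under these four hypotheses, the classical covering-lemma chase produces honest closed intervals $I,J$ rather than a more degenerate pair of continua. Concretely, one must show that if $F$ has an odd-period orbit then there is an interval $I$ and an iterate $k$ with $I\subseteq f^k(I)\cap f^k(J)$ and similarly for $J$, in the \emph{strong} sense that each point of $I$ is the projection of a thread in the inverse limit that alternates between $I$ and $J$ according to a prescribed symbol sequence. Teasing this out is where almost nonfissility does its real work, and it is also exactly the place where the construction in the present paper shows that the hypothesis cannot be dropped.
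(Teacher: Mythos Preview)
This theorem is not proved in the present paper. It is quoted from the companion manuscript \cite{DunnRyden2} (``Periodicity and Indecomposability in Set-valued Inverse Limits'') and stated here only as background motivation; the body of the current paper is devoted to the \emph{converse} direction, namely the construction in Example~\ref{Counterexample} showing that the almost-nonfissile hypothesis cannot be dropped. Consequently there is no proof in this paper against which your proposal can be compared.

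That said, your outline follows the expected Barge--Martin template and correctly identifies where each hypothesis is likely to enter. The third step, however, is not quite right as written. The sets $K_\sigma$ indexed by $\{I,J\}^{\mathbb N}$ are not pairwise disjoint (distinct symbol sequences can share threads whenever the intervals $I$ and $J$ overlap at their common endpoint), and your claim that ``any proper subcontinuum meets at most countably many $K_\sigma$'' is neither standard nor obviously true; in the classical argument one does not argue indecomposability by counting which $K_\sigma$ a proper subcontinuum hits. The usual route is instead to take $K$ to be the inverse limit of $f^k$ restricted to $I\cup J$ (or an appropriate irreducible subcontinuum thereof) and then prove indecomposability either by exhibiting three subcontinua no two of which cover $K$ yet any two of which have union dense in $K$, or by showing that every proper subcontinuum has empty interior in $K$ (equivalently, that $K$ has uncountably many composants). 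If you intend to complete the argument, that is the place to tighten.
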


A natural question arising from this theorem is whether it is necessary to assume $f$ is almost nonfissile in order to guarantee the inverse limit contains an indecomposable subcontinuum, as maps in the classical setting are almost nonfissile by definition. By almost nonfissile we mean that the set of points $(x,y)\in G(f)$ where $f(x)=\{y\}$ is a dense $G_{\delta}$ subset of $G(f)$, i.e. that the graph of $f$ is the closure of a subgraph that can be described with a single-valued function. This is a stronger condition than the assumption that $\{x\in[0,1]:|f(x)|=1\}$ is a dense $G_{\delta}$ subset of $[0,1]$. In fact, the theorem does not hold if we use the weaker notion in place of almost nonfissile, which holds for family of functions we will construct. The main result below follows from Theorems \ref{Properties} and \ref{HereditarilyDecomposable} and Corollary \ref{ArcwiseConnected}:

\begin{theorem}\label{Main}
There is a function $F:[0,1]\rightarrow 2^{[0,1]}$ such that $F$ is upper semicontinuous, surjective, and light, has the intermediate value property, and has orbits of period $n$ for any $n\in\mathbb{N}$, and such that $G(F)$ has empty interior, yet $\invlim\{[0,1],F\}$ is an arcwise connected, hereditarily decomposable tree-like continuum.
\end{theorem}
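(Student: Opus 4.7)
The plan is to construct $F$ explicitly so that $G(F)$ is the closure of the graph of a classical continuous self-map of $[0,1]$ with full Sharkovsky periodic structure (for instance a tent-type map) together with a countable, carefully placed family of small arcs (``thickenings'') added to the graph. The thickenings are chosen to accumulate on every point of the embedded single-valued piece, so that $F$ fails to be almost nonfissile in a controlled way (this must be the obstruction that lets us dodge Theorem \ref{PeriodicIndecomposable}), while each individual thickening is nowhere dense so that $G(F)$ has empty interior. With this setup, the properties claimed in Theorem \ref{Properties} are largely formal: upper semicontinuity is automatic since $G(F)$ is closed, surjectivity and orbits of every period come for free from the embedded single-valued map by Sharkovsky, lightness follows because point preimages are arranged to be totally disconnected, and the intermediate value property is ensured by choosing the thickenings so that each section $F(x)$ is an interval varying suitably with $x$.

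The heart of the argument is Theorem \ref{HereditarilyDecomposable}. I would first express $\invlim\{[0,1],F\}$ as a nested intersection of tree-like finite-stage approximations obtained by truncating $G(F)$ to a finite subunion of arcs and taking finite-stage inverse limits; this will also deliver tree-likeness at the end. To prove hereditary decomposability, take any subcontinuum $K\subset \invlim\{[0,1],F\}$, choose a coordinate $n$ at which the projection $\pi_n(K)$ is a proper subcontinuum of $[0,1]$, pick an interior point $p\in\pi_n(K)$, and split $K$ into the two proper subcontinua consisting of threads whose $n$th coordinate lies on each side of $p$. The delicate step, which I expect to be the principal obstacle, is showing these two halves are themselves subcontinua: in the almost nonfissile setting the bonding map forces the two sides to reconnect through earlier coordinates, producing exactly the indecomposable spiral of Theorem \ref{PeriodicIndecomposable}; here the density of thickenings gives enough ``give'' that the two pieces stay separated while each remains connected after pulling back. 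Making this quantitative requires controlling how small arcs of $G(F)$ behave under iterated preimages, showing they survive finitely many pullbacks without collapsing onto the single-valued skeleton — this is where the careful spacing and sizes of the thickenings in the construction must be used directly.

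Finally, arcwise connectedness (Corollary \ref{ArcwiseConnected}) follows from an explicit path construction: between any two threads one concatenates motions along the embedded continuous map at a chosen coordinate with vertical moves inside the thickenings, pulling the resulting path back coordinate by coordinate and invoking the usual limit argument to produce an arc in $\invlim\{[0,1],F\}$. Tree-likeness is then a corollary of the tree-like approximation established at the start of the decomposability argument, completing the proof of Theorem \ref{Main}.
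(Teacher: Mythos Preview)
Your construction has a fatal flaw at the very first step. If $G(F)$ contains the graph of a single-valued continuous map $g$ with a periodic point of period not a power of $2$ (and any tent-type map certainly has one), then $\invlim\{[0,1],g\}$ sits inside $\invlim\{[0,1],F\}$ as a subspace: every thread of the classical inverse limit is also a thread for $F$. By the Barge--Martin theorem you yourself cite, $\invlim\{[0,1],g\}$ contains an indecomposable subcontinuum (for the full tent map it \emph{is} the Knaster bucket-handle). Hence $\invlim\{[0,1],F\}$ cannot be hereditarily decomposable, no matter what thickenings you add. Your heuristic that ``the density of thickenings gives enough `give' that the two pieces stay separated'' cannot rescue this: the indecomposable continuum already lives entirely in the single-valued skeleton and never touches the thickenings.

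The paper's construction is essentially the opposite of yours. The single-valued piece $f$ embedded in $G(F)$ is dynamically trivial: $f(t)<t$ for every $t\in(0,1]$, so $f$ has no periodic points except the fixed point $0$, and its inverse limit is just an arc. The periodic orbits of all periods come entirely from the \emph{set-valued} part: there is a Cantor set $C_1$ on which $F(t)=[0,1]$, so any finite sequence of distinct points of $C_1$ is automatically a cycle. The tameness of $f$ forces every thread $x\neq\overline{0}$ to eventually have all coordinates in a fixed Cantor set, and from this one builds an explicit arc $L_x$ from $x$ to $\overline{0}$. Hereditary decomposability then splits into two cases: if $\overline{0}\in K$ then $K=\bigcup_{x\in K}L_x$ and the composant of $\overline{0}$ is all of $K$; if $\overline{0}\notin K$ then $\pi_n[K]$ is degenerate for cofinitely many $n$ and one finds a separating point. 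Your splitting-at-an-interior-point idea resembles the second case, but the first case is where the real content lies, and it depends crucially on $f$ being contracting rather than chaotic.
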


We construct a family of functions $F:[0,1]\rightarrow 2^{[0,1]}$ in Example \ref{Counterexample} each member of which satisfies the requirements of Theorem \ref{Main}, with the possible exception of being light. However, we also note in Example \ref{Counterexample} that many members of this family are light and all satisfy Theorem \ref{Main}.

\section{Definitions and Notation}

Here a \emph{continuum} will refer to a nonempty, compact, connected metric space. For a continuum $X$, we denote the collection of nonempty compact subsets of $X$ by $2^{X}$ and denote the collection of nonempty subcontinua of $X$ by $C(X)$.

\begin{definition}
\rm{A function $f:[a,b]\rightarrow2^{[c,d]}$ is \emph{upper semicontinuous at $x$} if for every open set $U$ containing $f(x)$ there is an open set $V$ containing $x$ such that if $y\in V$, then $f(y)\subseteq U$. $f$ is \emph{upper semicontinuous} if it is upper semicontinuous at every point in its domain.}
\end{definition}

The graph of a function $f:[a,b]\rightarrow2^{[c,d]}$ is the set $G(f)=\{(x,y)\in[a,b]\times[c,d]:y\in f(x)\}$. It is well known from \cite{IngramMahavier} that $f$ is upper semicontinuous if and only if $G(f)$ is closed.

\begin{definition}
\rm{Let $X_{0},X_{1},X_{2},\dots$ be a sequence of continua and for all $i\in\omega$ let $f_{i+1}:X_{i+1}\rightarrow 2^{X_{i}}$ be usc. The \emph{inverse limit} of the pair $\{X_{i},f_{i}\}$ is the subspace of $\prod_{i\in\omega}X_{i}$ given by
\[\invlim\{X_{i},f_{i}\}=\left\{x=(x_{0},x_{1},\dots)\in\prod_{i\in\omega}X_{i}:x_{i-1}\in f_{i}(x_{i}) \ \forall i\geq1\right\}.
\]
The spaces $X_{i}$ are called the \emph{factor spaces} of the inverse limit, and the functions $f_{i}$ are the \emph{bonding functions}. For each $n\in\omega$, the map $\pi_{n}:\invlim\{X_{i},f_{i}\}\rightarrow X_{n}$ defined by $\pi_{n}(x)=x_{n}$ is the \emph{projection map} onto the $n$th factor space. For $n > i$, $f_i^n: X_n \to X_i$ denotes the composition $f_{i+1} \circ f_{i+2} \circ ... \circ f_n$.}
\end{definition}

\begin{definition}
\rm{The function $f:[a,b]\rightarrow2^{[c,d]}$ is \emph{weakly continuous from the left at $x$} if it is upper semicontinuous and, for each $y\in f(x)$, there is a sequence $\{(x_{n},y_{n})\}_{n\in\omega}$ that converges to $(x,y)$ such that $x_{n}<x$ and $y_{n}\in f(x_{n})$ for each $n$.

The function $f:[a,b]\rightarrow2^{[c,d]}$ is \emph{weakly continuous from the right at $x$} if it is upper semicontinuous and, for each $y\in f(x)$, there is a sequence $\{(x_{n},y_{n})\}_{n\in\omega}$ that converges to $(x,y)$ such that $x_{n}>x$ and $y_{n}\in f(x_{n})$ for each $n$. The function $f$ is \emph{weakly continuous at $x$} if $f$ is weakly continuous from the left and from the right at $x$. And $f$ is \emph{weakly continuous} if it is weakly continuous for each $x\in(a,b)$.}
\end{definition}

\begin{definition}
\rm{Let $f:[a,b]\rightarrow 2^{[c,d]}$ be an upper semicontinuous function. We say $f$ has the \emph{intermediate value property} if, given distinct $x_{1},x_{2}$ and distinct $y_{1}\in f(x_{1})$, $y_{2}\in f(x_{2})$, if $y$ is strictly between $y_{1}$ and $y_{2}$, there is some $x$ strictly between $x_{1}$ and $x_{2}$ such that $y\in f(x)$.}
\end{definition}

If $f$ is upper semicontinuous and has the intermediate value property, it follows that $f$ is weakly continuous via Theorem 3.12 of \cite{Dunn}.


\begin{theorem} \cite{DunnRyden2}
Suppose $f:[0,1]\rightarrow 2^{[0,1]}$ is upper semicontinuous. Then $f$ has the intermediate value property if and only if $f$ is weakly continuous and $f(x)$ is connected for each $x$.
\end{theorem}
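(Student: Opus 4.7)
The plan is to handle the two directions separately. For the forward direction ($\Rightarrow$), weak continuity is immediate from the result cited from \cite{Dunn}, so I only need to show that $f(x)$ is connected for each $x \in [0,1]$. I argue by contradiction. If $f(x_0)$ is disconnected, choose two of its components $C_1, C_2$ with $\max C_1 < \min C_2$ (such an ordering exists because distinct components of a compact subset of $[0,1]$ are disjoint closed intervals), and then pick $y \in (\max C_1, \min C_2) \setminus f(x_0)$; this is possible, since otherwise $C_1 \cup (\max C_1, \min C_2) \cup C_2$ would be a connected subset of $f(x_0)$ meeting both components. Upper semicontinuity at $x_0$, applied to the open set $[0,1]\setminus\{y\}$, yields an open interval $V \ni x_0$ with $y \notin f(x)$ for every $x \in V$. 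Weak continuity from the left and right at $x_0$ then produces sequences $(x_n, y_n) \to (x_0, \max C_1)$ with $x_n < x_0$ and $(x_n', y_n') \to (x_0, \min C_2)$ with $x_n' > x_0$, all in $G(f)$; for large $n$ one has $x_n, x_n' \in V$ and $y_n < y < y_n'$. The intermediate value property applied to these two graph points yields $x \in (x_n, x_n') \subseteq V$ with $y \in f(x)$, contradicting the choice of $V$.

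For the reverse direction ($\Leftarrow$), I assume $f$ is upper semicontinuous and weakly continuous with $f(x)$ connected for every $x$. Given $x_1 < x_2$, $y_1 \in f(x_1)$, $y_2 \in f(x_2)$, and $y$ strictly between $y_1$ and $y_2$ (WLOG $y_1 < y < y_2$), I want to produce $x \in (x_1, x_2)$ with $y \in f(x)$, and again argue by contradiction. If no such $x$ exists, then for each $x \in (x_1, x_2)$, $f(x)$ is a connected subset of $[0,1]\setminus\{y\}$, hence lies in $[0, y)$ or in $(y, 1]$. Let $L$ and $H$ be the sets of $x \in (x_1, x_2)$ falling into these two respective cases. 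By upper semicontinuity, applied to the open sets $[0, y)$ and $(y, 1]$ of $[0,1]$, both $L$ and $H$ are open; they are disjoint and cover $(x_1, x_2)$, so connectedness of the interval forces one of them to be empty. If $H = \emptyset$, then $f(x) \subseteq [0, y)$ on $(x_1, x_2)$; weak continuity from the left at $x_2$ applied to $y_2 \in f(x_2)$ yields $(x_n, z_n) \in G(f)$ with $x_n \to x_2$, $x_n < x_2$, and $z_n \to y_2$. For large $n$, $x_n \in (x_1, x_2)$, so $z_n < y$, giving $y_2 \leq y$, which contradicts $y_2 > y$. The case $L = \emptyset$ is symmetric via weak continuity from the right at $x_1$.

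The main obstacle is a small care in the forward direction: IVP only places $x$ strictly between $x_n$ and $x_n'$, not automatically inside the neighborhood controlling $y$, so $V$ must be chosen to be a genuine open interval around $x_0$ so that $x_n < x_0 < x_n'$ both in $V$ force $(x_n, x_n') \subseteq V$. Apart from that, both directions are a straightforward combination of upper semicontinuity, one-sided weak continuity at the boundary, and the elementary observation that the connected subsets of $[0,1]$ avoiding a given interior point lie in one of the two half-intervals it cuts off.
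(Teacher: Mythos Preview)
The paper does not supply a proof of this theorem; it is quoted from \cite{DunnRyden2} and stated without argument, so there is no in-paper proof to compare against.

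Your argument is sound in its overall strategy, but there is an endpoint issue in both directions that you do not address. In the forward direction, your connectedness argument invokes weak continuity from the left \emph{and} the right at $x_0$; when $x_0 \in \{0,1\}$ only one side is available, so the two-sided sequence construction does not run. This is easily repaired, and in fact weak continuity is unnecessary here: once you have an open interval $V$ about $x_0$ with $y \notin f(x)$ on $V$, pick any $x' \in V \setminus \{x_0\}$ and any $z \in f(x')$; since $z \neq y$, apply the intermediate value property to $(x_0, y_2),\,(x', z)$ if $z < y$, or to $(x_0, y_1),\,(x', z)$ if $z > y$, to obtain $x$ strictly between $x_0$ and $x'$ (hence in $V$) with $y \in f(x)$, a contradiction.

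In the reverse direction you use weak continuity from the left at $x_2$ (respectively from the right at $x_1$), but the paper's definition of ``$f$ is weakly continuous'' only asserts two-sided weak continuity at points of $(0,1)$ and says nothing about one-sided behavior at $0$ or $1$. Under that literal reading the implication actually fails: take $f(x)=\{0\}$ for $x\in[0,1)$ and $f(1)=[0,1]$; this is upper semicontinuous, connected-valued, and weakly continuous at every point of $(0,1)$, yet the intermediate value property fails between $x_1=\tfrac12$ and $x_2=1$. So either the intended notion of weak continuity includes the natural one-sided conditions at the endpoints---in which case your argument is correct as written---or the theorem requires that extra hypothesis. Either way, you should treat the endpoint case explicitly.
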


\begin{definition}
\rm{Let $X$ and $Y$ be metric spaces and $f:X\rightarrow 2^{Y}$. An \emph{orbit} of $f$ is a sequence $\{x_{i}\}_{i\in\omega}$ where $x_{i+1}\in f(x_{i})$ for all $i$. If $x\in X$, an orbit of $x$ is an orbit of $f$ where $x_{0}=x$. The orbit is said to be \emph{periodic} if there is some $n\in\mathbb{N}$ such that $x_{n+i}=x_{i}$ for all $i$. The smallest such $n$ is called the \emph{period} of the orbit. A finite sequence $(x_{0},x_{1},\dots,x_{n-1})$ is called a \emph{cycle} if $(x_{0},x_{1},\dots,x_{n-1},x_{0},x_{1},\dots)$ is a periodic orbit.}
\end{definition}

\rm{As $f$ is a set-valued function, a given point may not have a unique orbit. Because of this, for a given orbit $\{x_{i}\}_{i\in\omega}$ there may be some $i\in\mathbb{N}$ such that $x_{i}=x_{0}$, even if $\{x_{i}\}_{i\in\omega}$ is not periodic. Similarly if $\{x_{i}\}_{i\in\omega}$ is an orbit of period $n$, there may be some $0<j<n$ such that $x_{j}=x_{0}$. For instance, the function $f:[0,1]\rightarrow 2^{[0,1]}$ defined by $f(0)=f(1)=[0,1]$ and $f(x)=\{0\}$ for $x\in(0,1)$ has $(0,0,1,0,1)$ as a periodic cycle.}

\begin{definition}
\rm{Let $X$ and $Y$ be metric spaces and $f:X\rightarrow 2^{Y}$. A point $x \in X$ is a \emph{fissile} point of $f$ if $|f(x)|>1$ and a \emph{nonfissile} point otherwise, i.e. $f(x)=\{y\}$.

A point $(x,y)\in G(f)$ is a \emph{fissile} point of $G(f)$ if $x$ is a fissile point of $f$ and a \emph{nonfissile} point of $G(f)$ otherwise.

The function $f$ is \emph{almost nonfissile} if the set of nonfissile points of $G(f)$ is a dense $G_{\delta}$ subset of $G(f)$.}
\end{definition}

\begin{definition}
\rm{A function $f:X\rightarrow 2^{Y}$ is \emph{light} if for every $y\in[0,1]$, the set $\{x\in[0,1]:y\in f(x)\}$ has no interior.}
\end{definition}

\begin{definition}
\rm{A nondegenerate continuum $X$ is \emph{decomposable} if it is the union of two proper subcontinua and \emph{indecomposable} if it is not decomposable. $X$ is \emph{hereditarily decomposable} if each of its nondegenerate subcontinua is decomposable.}
\end{definition}

\begin{definition}
\rm{A continuum $X$ is \emph{unicoherent} if, for every pair of subcontinua $A, B\subseteq X$ with $A\cup B=X$, $A\cap B$ is a continuum. $X$ is \emph{hereditarily unicoherent} if every subcontinuum is unicoherent, i.e. the intersection of any two non-disjoint pair of subcontinua of $X$ is itself a subcontinuum of $X$.

A \emph{tree} is a uniquely arcwise connected union of finitely many arcs. A continuum is \emph{tree-like} if it is homeomorphic to an inverse limit on trees.}

It is well-known that tree-like continua are hereditarily unicoherent.

\end{definition}

\begin{definition}
\rm{A set $C\subseteq [0,1]$ is a \emph{Cantor Set} if $C$ is a closed, perfect, nowhere dense set. A point $x\in C$ is called a \emph{left endpoint} of $C$ if there is some number $a$ such that $(a,x)\subseteq[0,1]\setminus C$ and a \emph{right endpoint} if there is some number $a$ such that $(x,a)\subseteq[0,1]\setminus C$. These endpoints form a countable dense subset of $C$.}
\end{definition}

\begin{definition}
\rm{A continuum $K$ is \emph{irreducible} about a nonempty closed set $A\subseteq K$ if no proper subcontinuum of $K$ contains $A$.}
\end{definition}

\begin{definition}
\rm{Let $K$ be a continuum and $p\in K$. The \emph{composant of $p$} in $K$ is the union of all proper subcontinua of $K$ that contain $p$.}
\end{definition}

It is well-known that a metric continuum has three composants if it is decomposable and irreducible, one composant if it is decomposable but not irreducible, and uncountably many composants if it is indecomposable.

\section{Constructing the Hereditarily Decomposable Inverse Limit}
\subsection{Cantor Sets}

\begin{lemma}\label{C01}
Let $C_{1}$ be the middle thirds Cantor set on $[1/4,3/4]$. There is a Cantor set $C_{0}$ such that $C_{1}\subsetneq C_{0}$ and no point of $C_{1}$ is an endpoint of $C_{0}$.
\end{lemma}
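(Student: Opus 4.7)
My plan is to obtain $C_0$ by adjoining to $C_1$ a scaled Cantor set inside each of its complementary gaps and on either side of $[1/4, 3/4]$, arranged so that every point of $C_1$ becomes a two-sided limit point of $C_0$.

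Specifically, I would enumerate the bounded open complementary intervals of $C_1$ in $[1/4,3/4]$ as $\{(a_n,b_n)\}_{n\in\mathbb N}$, and for each $n$ place an affine copy $K_n\subseteq[a_n,b_n]$ of the standard middle-thirds Cantor set. I would also choose affine copies $L\subseteq[0,1/4]$ and $R\subseteq[3/4,1]$ with $L$ containing $1/4$ and $R$ containing $3/4$, and then set
\[
C_0 \;=\; C_1 \cup L \cup R \cup \bigcup_{n\in\mathbb N} K_n.
\]
The crucial property of each piece is that, as in the middle-thirds set (where $0$ is a right-limit and $1$ is a left-limit), each of its two ``extreme'' points is a one-sided limit of the piece from inside the associated gap.

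To verify that $C_0$ is a Cantor set, I would check closedness by exploiting that $b_n - a_n \to 0$ (enumerating gaps level by level), so any convergent sequence drawing from infinitely many $K_n$ must limit to a point of $C_1$; perfectness is immediate since every piece is perfect; and empty interior follows from the Baire category theorem applied to $C_0$ as a closed countable union of nowhere dense closed subsets of $[0,1]$. Strict containment $C_1\subsetneq C_0$ is clear because, for example, $L$ contributes points outside $[1/4,3/4]$. The endpoint condition splits into three cases for $x\in C_1$: if $x$ is not the boundary of any gap of $C_1$ and $x\notin\{1/4,3/4\}$, then $C_1$ itself accumulates at $x$ from both sides; if $x\in\{1/4,3/4\}$, the combination of $C_1$ on the interior side and $L$ or $R$ on the exterior side removes endpoint status; and if $x$ is the boundary of a gap $(a_n,b_n)$, then $K_n$ accumulates at $x$ from inside the gap, while $C_1$ accumulates at $x$ from the other side since perfectness of $C_1$ prevents any point from being simultaneously a left and a right endpoint.

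I expect the endpoint case analysis to be the main bookkeeping obstacle; the nontrivial observation sustaining the third case is that perfectness of $C_1$ already rules out two-sided endpoints, so for each problematic point of $C_1$ only a single gap needs to be plugged by the corresponding $K_n$.
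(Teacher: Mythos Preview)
Your proposal is correct and follows essentially the same construction as the paper: adjoin a scaled Cantor set inside each complementary gap of $C_1$ so that every endpoint of $C_1$ becomes a two-sided limit point of $C_0$. The only cosmetic differences are that the paper works in the ambient interval $[1/8,7/8]$ so that the outer gaps $(1/8,1/4)$ and $(3/4,7/8)$ are handled uniformly with the interior ones (in place of your separate pieces $L$ and $R$), and it inserts two small Cantor sets near the ends of each gap rather than a single copy spanning it.
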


\begin{proof}

Construct $C_{0}$ as follows: Note that every left endpoint of $C_{1}$ corresponds to some point $b$ for some maximal interval $(a,b)\subseteq [1/8,7/8]\setminus C_{1}$, and every right endpoint of $C_{1}$ corresponds to some point $a$ for some maximal interval $(a,b)\subseteq [1/8,7/8]\setminus C_{1}$. Let $K_{a}$ and $K_{b}$ be the middle thirds Cantor sets on $[a,a+1/3(b-a)]$ and $[b-1/3(b-a),b]$ respectively.

Define $C_{0}=C_{1}\cup\left(\bigcup_{a}K_{a}\right)\cup\left(\bigcup_{b}K_{b}\right)$. Since $C_{0}$ is a countable union of Cantor sets, it is perfect and nowhere dense. To show $C_{0}$ is closed, let $x$ be a limit point of $C_{0}\setminus C_{1}$ and $\{x_{n}\}_{n\in\omega}$ be a sequence in $C_{0}$ converging to $x$. Since $x\notin C_{1}$, there is some maximal interval $(a,b)\subseteq [1/8,7/8]\setminus C_{1}$ with $x\in(a,b)$, hence $x_{n}\in(a,b)$ for cofinitely many $n$. Thus $x_{n}\in K_{a}\cup K_{b}$ cofinitely often and $x\in K_{a}\cup K_{b}$ as $K_{a}\cup K_{b}$ is closed. Thus $C_{0}$ is a Cantor set containing $C_{1}$ such that no point of $C_{1}$ is an endpoint of $C_{0}$.
\end{proof}

\begin{lemma}\label{IntermediateCantor}
Let $C,D\subseteq[0,1]$ be Cantor sets where $C\subsetneq D$ and no point of $C$ is an endpoint of $D$. Then there is a Cantor set $E$ such that $C\subsetneq E\subsetneq D$, no point of $C$ is an endpoint of $E$, and no point of $E$ is an endpoint of $D$.
\end{lemma}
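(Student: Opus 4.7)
The plan is to build $E$ by enriching $C$ with a carefully chosen Cantor subset of $D$ inside each maximal gap of $C$, closely paralleling the construction of Lemma~\ref{C01} but working inside $D$ rather than in $[0,1]$. Enumerate the maximal gaps of $C$ as $\{(a_i,b_i)\}_{i\in\mathbb{N}}$, so $a_i,b_i\in C$ and $(a_i,b_i)\cap C=\emptyset$. First observe that each $K_i:=D\cap[a_i,b_i]$ is a Cantor set: it is closed, nowhere dense (since $D$ is), and perfect, because $D$ is perfect and, by the hypothesis that no point of $C$ is an endpoint of $D$, the boundary points $a_i,b_i$ are accumulated by $D$ from within $(a_i,b_i)$. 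Let $S_i$ denote the countable set of endpoints of $D$ lying in $(a_i,b_i)$; by hypothesis $a_i,b_i\notin S_i$.

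The key subconstruction is to choose, inside each $K_i$, a Cantor set $E_i\subseteq K_i$ with $\{a_i,b_i\}\subseteq E_i$ and $E_i\cap S_i=\emptyset$. This is the following standard fact about Cantor sets: given a Cantor set $K$, two distinct points $p,q\in K$, and a countable set $S\subseteq K\setminus\{p,q\}$, there is a Cantor subset of $K\setminus S$ containing $p$ and $q$. The proof is a Cantor scheme: build a binary tree of clopen subsets of $K$, at stage $n$ excising a small clopen neighborhood of the $n$-th enumerated point of $S$ from each piece, routing $p$ along the all-zero branch and $q$ along the all-one branch, with diameters shrinking to zero. Apply this with $K=K_i$, $S=S_i$, $p=a_i$, $q=b_i$ to obtain $E_i$. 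For one distinguished gap $(a_{i_0},b_{i_0})$, additionally require the scheme to miss a preselected non-endpoint of $D$ in $K_{i_0}\setminus\{a_{i_0},b_{i_0}\}$, so that $E_{i_0}\subsetneq K_{i_0}$ and hence $E\subsetneq D$.

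Set $E:=C\cup\bigcup_i E_i$. The verification then runs along the lines of the proof of Lemma~\ref{C01}. Closedness follows from a sequence argument: a convergent sequence in $E$ with limit $x$ either has a subsequence in $C$ (so $x\in C$), a subsequence in a fixed $E_i$ (so $x\in E_i$), or terms in infinitely many distinct gaps, whose diameters must then tend to zero, forcing $x$ to be a limit of the $C$-points $a_{i_n}$ and hence in $C$. Perfection is immediate from perfection of $C$ and of each Cantor set $E_i$, and nowhere-density follows because any interval inside $E$ would, by nowhere-density of $C$, contain a subinterval lying entirely inside some gap, and therefore inside the nowhere-dense $E_i$. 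The inclusions $C\subsetneq E\subsetneq D$ are clear from the construction. For the endpoint conditions: a point $x\in C$ is either a two-sided limit of $C$ (hence of $E$) or an endpoint of $C$ of the form $a_i$ or $b_i$, in which case $E_i$ accumulates at $x$ from the gap side because $a_i,b_i$ are non-isolated points of the Cantor set $E_i$; either way $x$ is not an endpoint of $E$. And no point of $E$ is an endpoint of $D$: points of $C$ by hypothesis, and points of each $E_i$ by construction.

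The main obstacle is the Cantor-scheme construction of the $E_i$: at each stage one must split a clopen piece of $K_i$ into two disjoint clopen subpieces while simultaneously avoiding an enumerated endpoint of $D$ and preserving $a_i$ (resp.\ $b_i$) along its designated branch, all with diameters shrinking to zero. Once this subconstruction is in hand, the remainder is the same style of routine topology already employed in the proof of Lemma~\ref{C01}.
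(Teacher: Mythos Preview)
Your approach is sound in outline but takes a genuinely different route from the paper. The paper works top-down from $D$: it enumerates the endpoints $\{p_n\}$ of $D$ and, for each $p_n$, selects an open interval $(a_n,b_n)\ni p_n$ with $a_n,b_n\in D\setminus C$ chosen to be non-endpoints of $D$, arranging inductively that distinct such intervals are equal or disjoint; then $E=D\setminus\bigcup_n(a_n,b_n)$. No Cantor scheme is needed, and the verifications that $E$ is perfect and that no point of $C$ is an endpoint of $E$ follow from a direct monotone-subsequence argument using the $a_n$ and $b_n$. Your construction runs bottom-up from $C$, filling each gap of $C$ with a Cantor subset of $D$ manufactured by a scheme to avoid the endpoints of $D$. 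Your route is more modular (the ``Cantor subset through two prescribed points avoiding a countable set'' sublemma is a clean reusable statement), while the paper's is more self-contained and shorter, since it sidesteps the scheme entirely.

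There is one real oversight in your write-up. You enumerate the maximal gaps of $C$ as $(a_i,b_i)$ with $a_i,b_i\in C$, but the hypothesis forces $\min D<\min C$ and $\max C<\max D$ (otherwise $\min C$ or $\max C$, which lie in $C\subseteq D$, would be endpoints of $D$), so the two outermost complementary intervals of $C$ in $[0,1]$ do not have both boundary points in $C$. As written, your $E$ contains nothing below $\min C$ or above $\max C$; hence $\min C$ is a left endpoint of $E$ and $\max C$ a right endpoint of $E$, contradicting the conclusion. The repair is straightforward: run your same construction inside $D\cap[\min D,\min C]$ and $D\cap[\max C,\max D]$, requiring only the $C$-side boundary point to lie in the resulting $E_i$, and add $\min D$ (respectively $\max D$) to the countable set of points to be avoided.
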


\begin{proof}
Let $\{p_{n}\}$ be an enumeration of the endpoints of $D$.  Since $C$ is closed and $p_1 \notin C$, there are points $\alpha_1$ and $\beta_1$ of $C$ such that $p_1 \in (\alpha_1, \beta_1) \subset [0,1] \setminus C$. As points of $C$, $\alpha_1$ and $\beta_1$ are not endpoints of $D$.  Consequently, there are points $a_1$ and $b_1$ of $D \setminus C$ that are not endpoints of $D$ such that $\alpha_1 < a_1 < p_1 < b_1 < \beta_1$.


Proceeding inductively, suppose $(a_{i},b_{i})$ has been defined for $i \leq n$ so that the following hold.

\begin{itemize}

    \item $a_i, b_i \in D \setminus C$

   \item $a_i$ and $b_i$ are not endpoints of $D$

    \item $p_i \in (a_i,b_i)$

    \item if $p_i \in (a_j, b_j)$ for some $j < i$, then $(a_i, b_i) = (a_j, b_j)$

    \item if $p_i \notin (a_j, b_j)$ for each $j < i$, then $[a_i,b_i] \cap \left(\cup_{j<i} [a_j,b_j]\right) = \emptyset$

\end{itemize}

If $p_{n+1} \in [a_{i},b_{i}]$ for some $i \leq n$ (and hence $p_{n+1} \in (a_{i},b_{i})$), let $(a_{n+1},b_{n+1})=(a_{i},b_{i})$. Suppose $p_{n+1} \notin \bigcup_{i\leq n} [a_{i},b_{i}]$.  Since $C \cup \left(\bigcup_{i\leq n} [a_{i},b_{i}]\right)$ is closed and $p_{n+1} \notin C \cup \left(\bigcup_{i\leq n} [a_{i},b_{i}]\right)$, there are points $\alpha_{n+1}$ and $\beta_{n+1}$ of $C \cup \left(\bigcup_{i\leq n} [a_{i},b_{i}]\right)$ such that $p_{n+1} \in (\alpha_{n+1}, \beta_{n+1}) \subset [0,1] \setminus \left(C \cup \left(\bigcup_{i\leq n} [a_{i},b_{i}]\right)\right)$.  As points of $C \cup \left(\bigcup_{i\leq n} \{a_{i},b_{i}\}\right)$, $\alpha_{n+1}$ and $\beta_{n+1}$ are not endpoints of $D$.  Consequently, there are points $a_{n+1}$ and $b_{n+1}$ of $D \setminus \left( C \cup \left(\bigcup_{i\leq n} [a_{i},b_{i}]\right)\right)$ that are not endpoints of $D$ such that $\alpha_{n+1} < a_{n+1} < p_{n+1} < b_{n+1} < \beta_{n+1}$.  Then $a_{n+1}$ and $b_{n+1}$ satisfy the criteria above and, by induction, $(a_i, b_i)$ is defined for each positive integer $i$.

Define $E=D\setminus\left(\bigcup_{n\in\mathbb{N}}(a_{n},b_{n})\right)$. Then $C\subsetneq E\subsetneq D$, $E$ is closed, and no point of $E$ is an endpoint of $D$. Note also that since $a_{n},b_{n}\in D$ for all $n\in\mathbb{N}$, and since any two intervals of the form $[a_{n},b_{n}]$ are identical or disjoint, it follows that $a_{n},b_{n}\in E$ for each $n\in\mathbb{N}$. It remains to show that no point of $C$ is an endpoint of $E$ and $E$ is perfect.

Let $x\in E\setminus\{b_{n}:n\in\mathbb{N}\}$. As $x$ is not an endpoint of $D$, there is a subsequence $\{p_{n_{k}}\}_{k\in\mathbb{N}}$ such that $p_{n_{k}}\rightarrow x$ and $p_{n_{k}}<x$ for all $k$. Since $x\neq b_{n}$ for each $n$, then we may choose $\{p_{n_{k}}\}$ such that $[a_{n_{k}},b_{n_{k}}]$ and $[a_{n_{j}},b_{n_{j}}]$ are disjoint for $j\neq k$. Then, choosing a subsequence of $p_{n_{k}}$ if necessary to have monotone convergence, we have \[a_{n_{k}}<p_{n_{k}}<b_{n_{k}}<a_{n_{k+1}}<p_{n_{k+1}}<b_{n_{k+1}}<x.\] So $a_{n_{k}}\rightarrow x$, making $x$ a limit point of $E$ from the left. 

Similarly, let $x\in E\setminus\{a_{n}:n\in\mathbb{N}\}$. As $x$ is not an endpoint of $D$, there is a subsequence $\{p_{n_{k}}\}_{k\in\mathbb{N}}$ such that $p_{n_{k}}\rightarrow x$ and $p_{n_{k}}>x$. Furthermore, since $x\neq a_{n}$ for each $n$, we may choose $\{p_{n_{k}}\}_{k\in\mathbb{N}}$ such that \[x<a_{n_{k+1}}<p_{n_{k+1}}<b_{n_{k+1}}<a_{n_{k}}<p_{n_{k}}<b_{n_{k}}.\]
So $b_{n_{k}}\rightarrow x$, making $x$ a limit point of $E$ from the right.

Thus $E$ is a perfect set and therefore a Cantor set. Furthermore, each point of $E\setminus\{a_{n},b_{n}:n\in\mathbb{N}\}$ is not an endpoint of $E$. Thus every point of $C$ is not an endpoint of $E$.



\end{proof}

\begin{proposition} \label{NestedCantorSets}
There is a collection of Cantor sets $\{C_{r}: r\in\mathbb{Q}\cap[0,1]\}$ such that  when $r>s$, $C_{r}\subsetneq C_{s}$ and no point of $C_{r}$ is an endpoint of $C_{s}$.
\end{proposition}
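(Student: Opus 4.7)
The plan is to enumerate $\mathbb{Q} \cap (0,1)$ and construct the Cantor sets $C_r$ one at a time by induction, using Lemma \ref{C01} to seed the endpoints $C_0$ and $C_1$, and using Lemma \ref{IntermediateCantor} to slot each new $C_{q_n}$ between its immediate neighbors in the ordering of rationals already handled.

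First I would set $C_1$ to be the middle thirds Cantor set on $[1/4,3/4]$ and apply Lemma \ref{C01} to produce $C_0 \supsetneq C_1$ with no point of $C_1$ an endpoint of $C_0$. Then I would fix an enumeration $\mathbb{Q} \cap (0,1) = \{q_1, q_2, \dots\}$ and, at stage $n+1$, look at the finite set of rationals $R_n = \{0, 1, q_1, \dots, q_n\}$ for which $C_r$ has already been defined. Writing $s < q_{n+1} < t$ for the immediate predecessor and successor of $q_{n+1}$ in $R_n$, the inductive hypothesis guarantees $C_t \subsetneq C_s$ and that no point of $C_t$ is an endpoint of $C_s$, so Lemma \ref{IntermediateCantor} applies and yields a Cantor set $E$ with $C_t \subsetneq E \subsetneq C_s$, no point of $C_t$ an endpoint of $E$, and no point of $E$ an endpoint of $C_s$. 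Define $C_{q_{n+1}} = E$.

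The key verification is that after stage $n+1$ the full system of conditions holds across \emph{all} pairs in $R_{n+1}$, not merely for adjacent ones. For the strict inclusion, if $r > r'$ are any two elements of $R_{n+1}$, pick any chain of adjacent predecessor/successor steps between them in $R_{n+1}$; each step was a strict inclusion by construction, so $C_r \subsetneq C_{r'}$. For the endpoint condition, suppose $r > r'$ in $R_{n+1}$; I would show that no point of $C_r$ is an endpoint of $C_{r'}$ by splitting on whether $q_{n+1} \in \{r, r'\}$. If $q_{n+1} \notin \{r, r'\}$, the condition is inherited from stage $n$. If $r = q_{n+1}$, then $r' \le s$, so $C_{r'} \supseteq C_s$ and the inductive hypothesis gives that no point of $C_s$ is an endpoint of $C_{r'}$ (when $r' < s$, else trivially $r' = s$ and the lemma itself provides it); since $C_r = E \subseteq C_s$, no point of $C_r$ is an endpoint of $C_{r'}$. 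If $r' = q_{n+1}$, then $r \ge t$, so $C_r \subseteq C_t$, and since no point of $C_t$ is an endpoint of $E = C_{r'}$, the same holds for $C_r$.

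The main obstacle is precisely this bookkeeping across non-adjacent pairs; once one observes that $E$ is sandwiched between the neighbors in the inclusion order, monotonicity forces both endpoint conditions to propagate from the adjacent pairs to all pairs, but one must be careful to invoke the inductive hypothesis in the correct direction (the containments on the two sides of $q_{n+1}$ are reversed, since smaller rationals correspond to larger Cantor sets). No further density or topological argument is needed: the proposition only indexes over the countable set $\mathbb{Q} \cap [0,1]$, so the transfinite/continuous issues that would arise for a full $[0,1]$-indexed family are avoided.
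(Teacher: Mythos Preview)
Your proposal is correct and follows essentially the same approach as the paper: seed with $C_0$ and $C_1$ via Lemma~\ref{C01}, then repeatedly insert intermediate Cantor sets using Lemma~\ref{IntermediateCantor}. The only cosmetic difference is that the paper first builds the family over the dyadic rationals (always bisecting adjacent levels) and then reindexes by an order-preserving bijection to $\mathbb{Q}\cap[0,1]$, whereas you enumerate $\mathbb{Q}\cap(0,1)$ directly and locate immediate neighbors at each stage; your explicit verification of the endpoint condition across non-adjacent pairs is more careful than the paper's terse ``continuing inductively.''
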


\begin{proof}
Let $C_{0}$ and $C_{1}$ be as in Lemma \ref{C01}. By Lemma \ref{IntermediateCantor}, there is a Cantor set $C_{1/2}$ such that $C_{1}\subsetneq C_{1/2}\subsetneq C_{0}$, no point of $C_{1}$ is an endpoint of $C_{1/2}$, and no point of $C_{1/2}$ is an endpoint of $C_{0}$. By the same argument, there are Cantor sets $C_{1/4}$ and $C_{3/4}$ such that $C_{1}\subsetneq C_{3/4} \subsetneq C_{1/2}\subsetneq C_{1/4}\subsetneq C_{0}$ and if $s>r$, no point of $C_{r}$ is an endpoint of $C_{s}$. Continuing inductively, we may define a Cantor set $C_{r}$ for each dyadic rational $r$ in $[0,1]$ such that if $s>r$, then $C_{r}\subsetneq C_{s}$ and no point of $C_{r}$ is an endpoint of $C_{s}$. By reindexing the subscripts according to an order-preserving bijection between the dyadic rationals of $[0,1]$ and $\mathbb{Q}\cap[0,1]$, we achieve the desired result.
\end{proof}

\subsection{A Hereditarily Decomposable Inverse Limit}  


\begin{example}\label{Counterexample}
{\rm The following notation will be assumed for the remainder of the article.

\begin{itemize}
    \item Let $\{C_{r}: r\in\mathbb{Q}\cap[0,1]\}$ denote a collection of Cantor sets in $(0,1]$ such that, for $r > s$, $C_r \subsetneq C_s$ and no point of $C_r$ is an endpoint of $C_s$.

    \item Let $f:[0,1] \rightarrow [0,1]$ be a continuous function such that \\
        \indent $f(t) = 0$ for all $t \in \{0\} \cup C_0$ \\
        \indent $f(t) < \min C_0$ for all $t \in [0,1]$ \\
        \indent $f(t) < t$ for all $t \in (0,1]$   \\
        Note in particular the following possibilities: (1) $f$ could be light and (2) $f$ could be identically zero.

    \item Let $F:[0,1]\rightarrow C([0,1])$ be defined as follows:

        \vspace{.3\baselineskip}
        $F(t)=\left\{\begin{array}{ll}
        \{f(t)\} & \text{if } t \notin C_{0}  \\
        {[}0,\sup\{r: t \in C_{r}\}{]} & \text{if } t \in C_{0}
        \end{array}\right.$.
        \vspace{.3\baselineskip}  \\
        Note that $G(f) \subsetneq G(F)$ and that $F$ is light if and only if $f$ is light.

    \item Let $X=\invlim\{[0,1],F\}$. Note that $X$ is a continuum since $F(t)$ is connected for each $t \in [0,1]$.

    \item For each $x\in X$ and each $n \in \mathbb{N}$, let $L_{x_{n}}\subseteq[0,1]^{2}$ be the union of $\{(t,f(t)): 0 \leq t \leq x_n\}$ and the (possibly degenerate) vertical line segment from $(x_{n},f(x_n))$ to $(x_{n},x_{n-1})$.

    \item For $n \geq 1$, define $G_{x,n}:[0,x_{n}]\rightarrow C([0,x_{n-1}])$ by $G(G_{x,n})=L_{x_{n}}$.  Note that $G(G_{x,n}) \subsetneq G(F)$.

    \item Let $L_{x}=\invlim\{[0,x_{n}],G_{x,n}\}$. Note that $L_x$ is a continuum for each $x \in X$ since $G_{x,n}(t)$ is connected for each $n \in {\mathbb N}$ and $t \in [0,x_n]$.
    \end{itemize} }
\end{example}

%
%

\begin{theorem}\label{Properties}
$F$ is upper semicontinuous, surjective, has the intermediate value property, and has periodic cycles of period $n$ for every $n\in\mathbb{N}$. $F$ is not almost nonfissile, and $F$ is light if and only if $f$ is light. $G(F)$ has empty interior.
\end{theorem}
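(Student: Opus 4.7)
My plan is to verify the listed properties in turn, leaning on the explicit description
\[
G(F)=G(f)\cup\bigcup_{t\in C_{0}}\{t\}\times[0,s_{t}],\qquad s_{t}:=\sup\{r\in\mathbb{Q}\cap[0,1]:t\in C_{r}\},
\]
together with Proposition \ref{NestedCantorSets}. The consequence of that proposition I use throughout is that, for rationals $r>s$, every point of $C_{r}$ is a bilateral limit point of $C_{s}$; specializing to $s=0$ shows that endpoints of $C_{0}$ satisfy $s_{t}=0$ and hence $F(t)=\{0\}$ at such points.

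To show $F$ is upper semicontinuous I verify $G(F)$ is closed. Given $(t_{n},y_{n})\to(t,y)$ in $G(F)$, the case $t\notin C_{0}$ is handled by continuity of $f$. If $t\in C_{0}$ and $y>s_{t}$, choose rational $r\in(s_{t},y)$; since $t\notin C_{r}$ and $C_{r}$ is closed, eventually $t_{n}\notin C_{r}$, and moreover $f(t_{n})\to f(t)=0<r$ by continuity of $f$, so in either of the cases $t_{n}\in C_{0}$ (giving $y_{n}\le s_{t_{n}}\le r$) or $t_{n}\notin C_{0}$ (giving $y_{n}=f(t_{n})<r$) one has $y_{n}\le r$ eventually, contradicting $y_{n}\to y>r$. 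Since $F(t)=[0,1]$ for every $t\in C_{1}$, $F$ is surjective and any tuple of $n$ distinct points of $C_{1}$ is a cycle of period exactly $n$. The inclusion $G(F)\subseteq G(f)\cup(C_{0}\times[0,1])$ exhibits $G(F)$ as a subset of the union of two closed nowhere dense subsets of $[0,1]^{2}$, which is itself nowhere dense, so $G(F)$ has empty interior.

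For the intermediate value property I apply the equivalence of IVP with the conjunction of weak continuity and pointwise connectedness (the earlier theorem attributed to \cite{DunnRyden2}). Connectedness of $F(t)$ is clear. Weak continuity at $t\notin C_{0}$ follows from density of $[0,1]\setminus C_{0}$ and continuity of $f$; weak continuity at $t\in C_{0}$ with $s_{t}=0$ is again handled by $f$ alone since $F(t)=\{0\}$. When $t\in C_{0}$, $s_{t}>0$, and $y\in[0,s_{t})$, choose a rational $r\in(y,s_{t})$; since $t\in C_{r'}$ for some rational $r'\in(r,s_{t})$, Proposition \ref{NestedCantorSets} says $t$ is not an endpoint of $C_{r}$, so $t$ is a bilateral limit of points of $C_{r}$, and any sequence $t_{n}\to t$ in $C_{r}$ from either side automatically satisfies $y\in[0,r]\subseteq F(t_{n})$. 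The boundary value $y=s_{t}$ is then obtained by diagonalization over values approximating $s_{t}$ from below.

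For the remaining clauses: the fissile points of $G(F)$ form the set $\{(t,y):t\in C_{0},\ s_{t}>0,\ 0\le y\le s_{t}\}$, which contains $C_{1}\times[0,1]$; for any $t_{0}\in C_{1}$ and any $y_{0}\in(\min C_{0},1)$, a sufficiently small neighborhood of $(t_{0},y_{0})$ in $G(F)$ lies strictly above $\min C_{0}$ and hence is disjoint from $G(f)$, which contains the nonfissile set, so $F$ is not almost nonfissile. For the equivalence of lightness, $\{t:y\in F(t)\}$ lies in $f^{-1}(\{y\})\cup C_{0}$ and contains $f^{-1}(\{y\})\setminus C_{0}$, so (since $C_{0}$ is nowhere dense) this set has empty interior iff $f^{-1}(\{y\})$ does, giving the equivalence in both directions. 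I anticipate the main obstacle being weak continuity at points of $C_{0}$, where the key step is extracting bilateral limit points of the intermediate Cantor sets from Proposition \ref{NestedCantorSets}.
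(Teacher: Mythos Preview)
Your proof is correct and follows essentially the same approach as the paper's: both establish upper semicontinuity by verifying closedness of $G(F)$, derive the intermediate value property via the equivalence with weak continuity plus connected images, and dispatch surjectivity, periodicity, empty interior, non-almost-nonfissility, and lightness by the same short observations. The only organizational differences are that you case-split the closedness argument on the limit point $t$ rather than on the sequence $t_n$, and you treat all values $y\in[0,s_t]$ explicitly in the weak-continuity check whereas the paper reduces to $y=\max F(t)$; neither changes the substance.
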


\begin{proof}
As $F(t)=[0,1]$ for $t\in C_{1}$, $F$ is surjective. To show $F$ is upper semicontinuous, let $\{(t_{n},y_{n})\}_{n\in\omega}$ be a sequence in $G(F)$ that converges to some point $(t,y)$. Then either $t_{n}\in C_{0}$ cofinitely often or $t_{n}\notin C_{0}$ for infinitely many $n$. First suppose $t_{n}\notin C_{0}$ for infinitely many $n$. Then there is a subsequence $\{(t_{n_{k}},y_{n_{k}})\}_{k\in\omega}$ converging to $(t,y)$ such that $t_{n_{k}}\notin C_{0}$ for each $k$. Thus $(t_{n_{k}},y_{n_{k}})\in G(f)$; hence $(t,y)\in G(f)\subseteq G(F)$ by the continuity of $f$.

Next, suppose $t_{n}\in C_{0}$ for cofinitely many $n$. Since $C_{0}$ is closed, it follows that $t\in C_{0}$. If if $y=0$, then $(t,y)\in G(F)$. Suppose $y\neq 0$ and $s\in(0,y)\cap\mathbb{Q}$. As $y_{n}\rightarrow y$, there is some $N_{s}\in\mathbb{N}$ such that $n\geq N_{s}$ implies $y_{n}\in(s,1]$. Since $t_{n}\in C_{0}$ for cofinitely many $n$, we may choose $N_{s}$ such that for $n\geq N_{s}$, $t_{n}\in C_{0}$. Then $t_{n}\in C_{s}$ for $n\geq N_{s}$. So $F(t)\supseteq[0,s]$. Then $F(t)\supseteq\bigcup_{s\in(0,y)\cap\mathbb{Q}}[0,s]=[0,y)$. As $F(t)$ is closed, $y\in F(t)$, hence $(t,y)\in G(F)$, making $F$ upper semicontinuous.

To show $F$ is weakly continuous, let $(t,y)\in G(F)$. If $t\notin C_{r}$ for any $r\neq 0$, $(t,y)\in G(f)$ and weak continuity at $t$ is clear. If there is some $r\neq0$ such that $t\in C_{r}$, then it is sufficient to show that weak continuity holds for $y=\max F(t)>0$. Let $\{ s_{n}\}_{n\in\omega}$ be a sequence in $\mathbb{Q}\cap[0,1]$ such that $s_{n}\rightarrow y$ and $s_{n}<y$ for all $n$. By the construction of the Cantor sets, for each $n$, $t\in C_{s_{n}}$, and $t$ is not an endpoint of any of the $C_{s_{n}}$'s. Thus there is some $t_{n}\in C_{s_{n}}$ such that $|t-t_{n}|<1/n$ and $t_{n}<t$ for all $n$ (or $t_{n}>t$ for all $n$). As $t_{n}\in C_{s_{n}}$, $s_{n}\in F(t_{n})$. Thus $\{(t_{n},s_{n})\}_{n\in\omega}$ is a sequence in $G(F)$ converging to $(t,y)$, making $F$ weakly continuous at $t$. Since the image of each point is connected and $F$ is weakly continuous, $F$ has the intermediate value property by Theorem 2.15 of \cite{DunnRyden2}.

There are no nondegenerate intervals on which $F$ is nondegenerate, so $G(F)$ has empty interior. $F$ is not almost nonfissile as all nonfissile points are contained in $G(f)$, which is a closed proper subset of $G(F)$. Since $C_{0}$ is nowhere dense, $F$ can fail to be light only on a subinterval of $[0,1]\setminus C_{0}$, on which $F$ agrees with $f$. Thus $F$ is light if and only if $f$ is light.

To see that $F$ has cycles of all periods, let $n \in \mathbb{N}$.  Choose distinct points $t^{1},t^{2},\dots, t^{n}$ in $C_{1}$. Then for $1\leq i\leq n$, $F(t^{i})=[0,1]$. Thus $(t^{1},\dots, t^{n})$ is a periodic cycle of period $n$.
\end{proof}

\begin{lemma} \label{Tail}
Let $x\in X\setminus \{\overline{0}\}$. Then there is some $N\in\omega$ such that $x_{n}\in C_{0}$ if and only if $n\geq N$.
\end{lemma}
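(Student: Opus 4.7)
The plan is to establish the lemma in two parts: (i) once a coordinate of $x$ lies in $C_0$, every subsequent coordinate does too; and (ii) if no coordinate of $x$ lies in $C_0$, then $x = \overline{0}$. Together these imply that $N := \min\{n \in \omega : x_n \in C_0\}$ is well-defined for $x \ne \overline{0}$ and yields the required characterization.

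For (i), suppose $x_n \in C_0$. If $x_{n+1} \notin C_0$, then $F(x_{n+1}) = \{f(x_{n+1})\}$, so $x_n = f(x_{n+1}) < \min C_0$ by the hypothesis $f(t) < \min C_0$ for all $t \in [0,1]$. This contradicts $x_n \in C_0$, so $x_{n+1} \in C_0$ and the claim follows by induction on $n$.

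For (ii), assume $x_n \notin C_0$ for every $n$, so that $x_{n-1} = f(x_n)$ for each $n \geq 1$. Since $f(0) = 0$, every zero coordinate forces every earlier coordinate to be zero; hence if $x_m = 0$ for arbitrarily large $m$, then $x = \overline{0}$ and we are done. Otherwise only finitely many coordinates can vanish, so there exists some $k \in \omega$ with $x_n > 0$ for all $n \geq k$. The inequality $f(t) < t$ on $(0,1]$ then yields $x_n = f(x_{n+1}) < x_{n+1}$ for every $n \geq k$, so $\{x_n\}_{n \geq k}$ is strictly increasing in $(0,1]$ and converges to some $L > 0$. Continuity of $f$ gives $L = f(L)$, contradicting $f(L) < L$. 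Hence this case is impossible, completing (ii).

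No significant obstacle is anticipated; the argument is a direct exploitation of the inequalities built into the construction of $f$. The only point warranting attention is isolating the positive tail of the sequence in (ii), which is handled by observing that zero coordinates propagate backward through the orbit via $f(0) = 0$.
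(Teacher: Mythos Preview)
Your proof is correct and follows essentially the same approach as the paper's: both argue that membership in $C_0$ propagates forward along the sequence (via $f(t) < \min C_0$), and that if no coordinate lies in $C_0$ then the coordinates form an eventually increasing sequence converging to a positive fixed point of $f$, contradicting $f(t) < t$ on $(0,1]$. The only difference is cosmetic: the paper observes directly that $f(t) \le t$ makes $\{x_n\}$ nondecreasing and hence (since $x \ne \overline{0}$) convergent to a positive limit, whereas you first dispose of the possibility that zeros occur arbitrarily far out before isolating a strictly increasing tail.
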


\begin{proof}
Since $F(t) = f(t) < \min C_0$ for $t \notin C_0$, it follows that, if $x_n \notin C_0$ for some $n$, then $x_k \notin C_0$ for each $k \leq n$.  Equivalently, if $x_n \in C_0$ for some $n$, then $x_k \in C_0$ for each $k \geq n$.  But it is not the case that $x_n \notin C_0$ for every $n \in {\omega}$; otherwise $x_1, x_2, x_3,...$ would be a nondecreasing sequence bounded above by $\min C_0$ that would converge to a fixed point of $f$ lying in $(0, \min C_0]$, contrary to the definition of $f$. Thus there is $N \in \omega$ such that $x_{n} \in C_0$ if and only if $n \geq N$.
\end{proof}

\begin{proposition}
For each $x \in X \setminus \{\overline{0}\}$, $L_{x}$ is an arc with endpoints at $x$ and $\overline{0}$.
\end{proposition}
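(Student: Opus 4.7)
The plan is to decompose $L_x$ as an increasing nested sequence of arcs whose far endpoints accumulate at $\overline{0}$, and then assemble these into a single arc from $\overline{0}$ to $x$.

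First, I will apply Lemma \ref{Tail} to fix $N \in \omega$ with $x_n \in C_0$ iff $n \geq N$. Then $G_{x,n}$ agrees with $f$ on $[0, x_n]$ except at the point $x_n$, where for $n \geq N$ one has $G_{x,n}(x_n) = [0, x_{n-1}]$ (using $f(x_n) = 0$). Next, for an arbitrary $y = (y_0, y_1, \ldots) \in L_x$, I will establish two structural facts. (i) The set $\{n \geq N : y_n = x_n\}$ is upward closed: if $y_n = x_n$ but $y_{n+1} \neq x_{n+1}$, single-valuedness of $G_{x,n+1}$ at $y_{n+1}$ forces $x_n = f(y_{n+1}) < \min C_0 \leq x_n$, a contradiction. (ii) If $y_n \neq x_n$ for every $n \geq N$, then $y = \overline{0}$: the recursion collapses to $y_{n-1} = f(y_n)$ for all $n \geq 1$, so $y_k = f^{n-k}(y_n)$ for every $n \geq k$; since $f(t) < t$ on $(0,1]$ and $f(0) = 0$, the iterates $f^n$ are monotone decreasing to $0$ on the compact space $[0,1]$, so Dini's theorem gives uniform convergence, forcing $y_k = 0$ for every $k$.

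Combining (i) and (ii), $L_x = \{\overline{0}\} \cup \bigcup_{M \geq N} \widetilde{I}_M$, where $\widetilde{I}_M := \{y \in L_x : y_n = x_n \text{ for all } n \geq M\}$. I will then prove by induction on $M \geq N$ that $\widetilde{I}_M$ is an arc from $x$ to $p_M := (0, \ldots, 0, x_M, x_{M+1}, \ldots)$ (with $M$ leading zeros). The base case $\widetilde{I}_N$ is the image of $[0, x_{N-1}]$ under the continuous injection $t \mapsto (f^{N-1}(t), \ldots, f(t), t, x_N, x_{N+1}, \ldots)$, which sends $0$ to $p_N$ and $x_{N-1}$ to $x$. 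For the induction step, decompose $\widetilde{I}_{M+1} = A_M \cup \widetilde{I}_M$, where $A_M$ is parameterized by $t = y_M \in [0, x_M]$ with $y_n = f^{M-n}(t)$ for $n < M$ and $y_n = x_n$ for $n > M$; since $f(x_M) = 0$, the arc $A_M$ runs from $p_{M+1}$ (at $t=0$) to $p_M$ (at $t=x_M$) and meets $\widetilde{I}_M$ only at $p_M$, so their union is again an arc.

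Finally, $p_M \to \overline{0}$ in the product topology (each fixed coordinate is eventually zero) and $\operatorname{diam}(A_M) \to 0$. I will then define $\phi : [0,1] \to L_x$ by mapping $[1/2, 1]$ affinely onto $\widetilde{I}_N$ via the above parameterization, mapping $[2^{-(k+1)}, 2^{-k}]$ affinely onto $A_{N+k-1}$ for each $k \geq 1$, and setting $\phi(0) = \overline{0}$. The pieces agree at the shared endpoints $p_M$; continuity at $0$ follows from $p_M \to \overline{0}$ together with $\operatorname{diam}(A_M) \to 0$; and injectivity follows from the pairwise disjointness of the constituent arcs away from their consecutive common endpoints. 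Since a continuous bijection from a compact space onto a Hausdorff one is a homeomorphism, $L_x$ is an arc with endpoints $\overline{0}$ and $x$. The step I expect to be most delicate is fact (ii), whose Dini argument is the only place the hypothesis $f(t) < t$ on $(0,1]$ enters essentially; verifying $A_M \cap \widetilde{I}_M = \{p_M\}$ in the inductive step also requires care, using $f(x_M) = 0$ together with $f < \min C_0$ to rule out accidental intersections.
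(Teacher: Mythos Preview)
Your proof is correct and essentially matches the paper's: both decompose $L_x$ as a chain of arcs $A_M$ (the paper's $K_n$) parameterized by $t \mapsto (f^M(t),\ldots,f(t),t,x_{M+1},\ldots)$ and meeting only at the points $p_M$ (the paper's $y^M$), with $\overline{0}$ as the sole remaining limit point. The only differences are cosmetic---you invoke Dini's theorem for fact~(ii) where the paper simply re-applies Lemma~\ref{Tail} to an arbitrary $z \in L_x \subset X$, and your base case uses $x_{N-1}$ and so tacitly assumes $N \geq 1$ (when $N = 0$ one takes $\widetilde{I}_0 = \{x\} = \{p_0\}$ and starts the induction there, as the paper does).
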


\begin{proof}

By Lemma \ref{Tail}, there is some $N\in\omega$ such that $x_{n}\in C_{0}$ if and only if $n\geq N$. For $n \geq N$, let
\[K_{n} = \{(f^n(t), \dots, f(t), t, x_{n+1}, x_{n+2}, \dots): 0 \leq t \leq x_{n}\}.\]
Then $K_{n}$ is an arc from $y^{n+1}$ to $y^{n}$, where $y^{i}=(0,\dots,0,x_{i},x_{i+1},\dots)$ for each $i\geq N$.

We show that for each $n$, $K_{n+1} \cap \left(\bigcup_{i\leq n}K_{i} \right) = \{y^{n+1}\}$. First, note that for $N\leq i<n$ and $z\in K_{i}$, $z_{n}=x_{n}$. However, the $n$-th coordinate of any point of $K_{n+1}$ is less than $\min C_{0}\leq x_{n}$. Then $z\notin K_{n+1}$, making $K_{n+1}\cap K_{i}=\emptyset$. For the case of $i=n$, note that $y^{n+1}$ is the only point of $K_{n+1}$ whose $(n+1)$-st coordinate is $x_{n+1}$. However, $x_{n+1}$ is the $(n+1)$-st coordinate of every point in $K_{n}$. Therefore, $K_{n+1}\cap K_{n}=\{y^{n+1}\}$. It follows that $K_{n+1} \cap \left(\bigcup_{i\leq n}K_{i} \right) = \{y^{n+1}\}$. Therefore, $\bigcup_{i \leq n} K_{i}$ is an arc from $y^N$ to $y^{n+1}$.  So $\bigcup_{n \geq N} K_{n}$ is a ray in $L_{x}$ with endpoint $y^N$ that does not contain $\overline{0}$.

To show that $L_{x}$ is an arc, we consider two cases for $N$.  First, suppose $N \not = 0$. Note that $y^N \not = x$.  Define $K_{N-1}$ by
\[K_{N-1} = \{(f^{N-1}(t), \dots, f(t), t, x_{N}, x_{N+1}, \dots): 0 \leq t \leq x_{N-1}\}.\]
Note that by the definition of $N$, taking $t=x_{N-1}$ implies $x_{N-1}\notin C_{0}$, hence $x_{i}=f^{N-1-i}(x_{N-1})$ for $i\leq N-1$. Thus the point of $K_{N-1}$ corresponding to $t=x_{N-1}$ is $x$. Then $K_{N-1}$ is an arc from $y^N$ to $x$ that intersects $\bigcup_{n \geq N} K_{n}$ only at the point $y^N$.  Hence $\bigcup_{n \geq N-1} K_{n}$ is a ray in $L_{x}$ with endpoint $x$ that does not contain $\overline{0}$.
To complete the proof in the case where $N \not = 0$, it suffices to show $\overline{\bigcup_{n \geq N-1} K_{n}} = L_{x}$ and $\overline{\bigcup_{n \geq N-1} K_{n}}\setminus \left(\bigcup_{n \geq N-1} K_{n} \right) = \{\overline{0}\}$.

To show $L_{x} \subseteq \left(\bigcup_{n\geq N-1} K_{n} \right) \cup \{\overline{0}\}$, let $z \in L_{x}$.  The claim trivially holds for $z=\overline{0}$. If $z \neq \overline{0}$, then by Lemma \ref{Tail}, $z_{i}\in C_{0}$ for cofinitely many $i$. Since $G_{x,i}(z_{i})\cap C_{0}\neq\emptyset$ only if $z_{i}=x_{i}$, it follows that $z_{i}=x_{i}$ for cofinitely many $i$. Thus there is an $M\in\omega$ such that $z_{i}=x_{i}$ if and only if $i\geq M$. If $M\leq N$, then $z=(f^{N-1}(t)\dots,f(t) ,t,x_{N},x_{N+1},x_{N+2},\dots)$, where $t=z_{N-1}$, and therefore $z\in K_{N-1}$. Suppose $M>N$. Then for $i<M$, $z_{i}<x_{i}$. So for $1\leq j\leq M-1$, $z_{j-1}=G_{x,j}(z_{j})=f(z_{j})$. Thus for $0\leq j\leq M-1$, $z_{M-1-j}=f^{j}(z_{M-1})$. So $z=(f^{M-1}(t)\dots,f(t) ,t,x_{M},x_{M+1},x_{M+2},\dots)$, where $t=x_{M-1}$, and therefore $z\in K_{M-1}$ and $L_{x} \subseteq \left(\bigcup_{n \geq N-1} K_{n}\right) \cup \{\overline{0}\}$.

Since $L_{x}$ is closed and $K_{n} \subseteq L_{x}$ for each $n \geq N$, $\overline{\bigcup_{n\geq N-1}K_{n}} \subseteq L_{x}$. As $y^{n} \rightarrow \overline{0}$, $\overline{0} \in \overline{\bigcup_{n \geq N-1}K_{n}} \setminus \left(\bigcup_{n \geq N-1} K_{n}\right)$. Thus we have $\overline{\bigcup_{n\geq N-1}K_{n}} \subseteq L_{x}\subseteq\left(\bigcup_{n\geq N-1}K_{n}\right)\cup\{\overline{0}\}\subseteq\overline{\bigcup_{n\geq N-1}K_{n}}$. It follows that $L_{x}=\overline{\bigcup_{n\geq N-1}K_{n}}$ and, since $\overline{0}\notin\bigcup_{n\geq N-1}K_{n}$, that $\overline{\bigcup_{n\geq N-1}K_{n}}\setminus\left(\bigcup_{n\geq N-1}K_{n}\right)=\{\overline{0}\}$ , making $L_{x}$ an arc from $x$ to $\overline{0}$.

For the case in which $N = 0$, $y^N = x$.  Hence $\bigcup_{n \geq N} K_{n}$ is a ray with endpoint $x$ for which it can be shown by similar arguments that $\overline{\bigcup_{n\geq N}K_{n}}\setminus\left(\bigcup_{n\geq N}K_{n}\right)=\{\overline{0}\}$ and $L_{x}=\overline{\bigcup_{n\geq N}K_{n}}$, making $L_{x}$ an arc from $x$ to $\overline{0}$.
\end{proof}

\begin{corollary}\label{ArcwiseConnected}
$X$ is arcwise connected.
\end{corollary}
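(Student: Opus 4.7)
The plan is to derive the corollary directly from the preceding proposition, which supplies, for every $x \in X \setminus \{\overline{0}\}$, an arc $L_x$ in $X$ with endpoints $x$ and $\overline{0}$.

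Given two points $x, y \in X$, I would first dispense with the trivial cases. If $x = \overline{0}$, then the arc $L_y$ (which exists since $y \neq \overline{0}$) connects $y$ to $\overline{0} = x$, and symmetrically if $y = \overline{0}$. So assume both $x, y \in X \setminus \{\overline{0}\}$, and consider the arcs $L_x$ and $L_y$, each having $\overline{0}$ as an endpoint.

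The union $L_x \cup L_y$ is a continuum containing $x$, $y$, and $\overline{0}$, and is the union of two arcs sharing the endpoint $\overline{0}$. I would then invoke the standard fact that such a union is arcwise connected: parametrizing $L_x$ by a homeomorphism $\alpha:[0,1] \to L_x$ with $\alpha(0) = x$ and $\alpha(1) = \overline{0}$, let $t^* = \inf\{t \in [0,1] : \alpha(t) \in L_y\}$. By compactness of $L_y$ and continuity of $\alpha$, $\alpha(t^*) \in L_y$, and $\alpha([0,t^*])$ is an arc (or single point) in $L_x$ from $x$ to $\alpha(t^*)$. Similarly, within $L_y$ there is a sub-arc from $\alpha(t^*)$ to $y$. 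Concatenating these two sub-arcs, which meet only at $\alpha(t^*)$, produces an arc in $X$ from $x$ to $y$.

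There is no real obstacle here; the work was done in the preceding proposition. The only minor care required is the splicing argument above, which handles the possibility that $L_x$ and $L_y$ intersect in more than just $\overline{0}$. Since $x$ and $y$ were arbitrary, $X$ is arcwise connected.
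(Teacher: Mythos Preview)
Your proposal is correct and follows exactly the approach the paper intends: the corollary is stated without proof as an immediate consequence of the preceding proposition, which furnishes for every $x \in X \setminus \{\overline{0}\}$ an arc $L_x$ from $x$ to $\overline{0}$. Your splicing argument to handle the possibility that $L_x$ and $L_y$ overlap in more than $\{\overline{0}\}$ is the standard detail the paper leaves to the reader.
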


\begin{theorem}\label{Tree-like}  (Theorem 4.2 \cite{Ingram2})
Suppose $\{f_{n}\}_{n\in\mathbb{N}}$ is a sequence of functions such that $f_{n}:[0,1]\rightarrow C([0,1])$ is a surjective upper semicontinuous function for each positive integer $n$. If, for each $n>1$, $Z_{n}$ is a closed totally disconnected subset of $[0,1]$ such that if $f_{n}(t)$ is nondegenerate then $t\in Z_{n}$ and $(f_{i}^{n})^{-1}(Z_{i})$ is totally disconnected for each $i$, $1\leq i\leq n$, then $\invlim \{[0,1],f_{n}\}$ is a tree-like continuum.
\end{theorem}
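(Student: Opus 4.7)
My plan is to invoke the standard characterization that a continuum is tree-like precisely when, for every $\varepsilon>0$, it admits a continuous surjection onto some tree with all point-preimages of diameter below $\varepsilon$. The hypothesis provides a filtration of ``branching sets'' $Z_n$ controlling where each $f_n$ is set-valued, and the condition that each compositional preimage $(f_i^n)^{-1}(Z_i)$ is totally disconnected ensures that, inside the $n$th factor, every locus of set-valuedness pulled back from an earlier level forms a closed totally disconnected set rather than an arc.

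First I would examine the partial inverse limit
\[
Y_n=\{(x_0,\dots,x_n)\in[0,1]^{n+1}:x_{i-1}\in f_i(x_i)\text{ for }1\le i\le n\},
\]
which is a continuum because each $f_i$ has connected values. Given $\delta>0$, using the total disconnectedness of each $(f_i^n)^{-1}(Z_i)$, I would cover these exceptional sets by finitely many pairwise disjoint closed subintervals of $[0,1]$ of diameter less than $\delta$, then build a finite simplicial tree $T_n^\delta\subset Y_n$ by replacing each resulting cluster of set-valued behavior with a single arc joining its branches and linking these arcs along the single-valued portions of the compositions $f_i^n$. A retraction $r_n^\delta\colon Y_n\to T_n^\delta$ of mesh below $\delta$ is then obtained by collapsing each branch onto its representative arc and interpolating on the single-valued parts.

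Given $\varepsilon>0$, I would next choose $N$ so large that the natural projection $\pi^N\colon X\to Y_N$ has fibers of diameter less than $\varepsilon/2$ in the product metric (which is automatic), and set $\delta=\varepsilon/2$. The composition $r_N^\delta\circ\pi^N$ is then an $\varepsilon$-map of $X$ onto the tree $T_N^\delta$, so by arbitrariness of $\varepsilon$, $X$ is tree-like. The main obstacle is the second-step construction: the worry is that the exceptional sets $(f_i^n)^{-1}(Z_i)$ for distinct $i$ could combine inside $Y_n$ to produce circuits that cannot be spanned by a finite tree, and the total-disconnectedness hypothesis on each exceptional set is precisely what prevents an accumulation of branch points from closing up into a loop. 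Verifying rigorously that the resulting $T_n^\delta$ is uniquely arcwise connected with the required mesh is the technical heart of the argument, and mirrors the strategy of Ingram's original proof in \cite{Ingram2}.
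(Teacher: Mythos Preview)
The paper does not prove this theorem at all: it is quoted verbatim as Theorem~4.2 of \cite{Ingram2} and invoked as a black box in the subsequent proposition to conclude that $X$ is tree-like. There is therefore no proof in the present paper against which to compare your proposal.

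As for the proposal itself, it is a reasonable outline of the standard $\varepsilon$-map strategy and, as you yourself note, is intended to mirror Ingram's original argument. But it is a plan rather than a proof. The substantive step---constructing the finite tree $T_n^\delta$ inside the partial inverse limit $Y_n$ and the $\delta$-retraction $r_n^\delta$---is asserted rather than carried out, and you explicitly flag that verifying $T_n^\delta$ is a tree with the required mesh is ``the technical heart of the argument.'' In particular, the phrase ``replacing each resulting cluster of set-valued behavior with a single arc joining its branches'' hides exactly the combinatorial work that the totally-disconnected hypotheses on the sets $(f_i^n)^{-1}(Z_i)$ are there to enable: one must show that the branch loci at different levels $i$, when pulled back to the $n$th coordinate, do not accumulate or interlock so as to force a simple closed curve in any approximating graph. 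Until that construction is written down and the acyclicity of $T_n^\delta$ is verified, what you have is a correct heuristic, not a proof. If you want the details, consult \cite{Ingram2} directly.
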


\begin{proposition}
$X$ is tree-like and therefore hereditarily unicoherent.
\end{proposition}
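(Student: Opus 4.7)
The plan is to apply Theorem~\ref{Tree-like} directly, taking $Z_n = C_0$ for every $n > 1$. Hereditary unicoherence then follows automatically since the excerpt already observes that tree-like continua are hereditarily unicoherent.

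Two of the three hypotheses are essentially immediate. First, $C_0$ is closed and totally disconnected because it is a Cantor set (Lemma~\ref{C01}). Second, if $F(t)$ is nondegenerate then $t \in C_0$, since the case $t \notin C_0$ in the definition of $F$ in Example~\ref{Counterexample} yields $F(t) = \{f(t)\}$, a singleton. The only substantive condition is the total disconnectedness of $(F_i^n)^{-1}(C_0)$ for each $1 \leq i \leq n$, and this is the step I would focus on.

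For that step, the key observation is the same one powering Lemma~\ref{Tail}: orbits of $F$ cannot re-enter $C_0$ once they leave it. By hypothesis $f$ maps $[0,1]$ into $[0,\min C_0)$, which is disjoint from $C_0$, and for $t \notin C_0$ we have $F(t)=\{f(t)\}$. I would then run a short induction on the number of compositions to show that $F_i^n(t) \subseteq [0,\min C_0)$ whenever $t \notin C_0$, so that $F_i^n(t) \cap C_0 = \emptyset$ for such $t$. This gives the containment $(F_i^n)^{-1}(C_0) \subseteq C_0$, and being a subset of the totally disconnected Cantor set $C_0$, it is itself totally disconnected.

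Since the inductive step reproduces the content of Lemma~\ref{Tail}, the only real work is bookkeeping; the main obstacle, if any, is simply confirming that Ingram's hypotheses are stated in a form that matches our $F$ (surjective upper semicontinuous with connected images, both already established in Theorem~\ref{Properties}). With the three hypotheses in place, Theorem~\ref{Tree-like} yields that $X$ is tree-like, and hence hereditarily unicoherent.
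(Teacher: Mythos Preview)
Your proposal is correct and mirrors the paper's own proof: both take $Z_n=C_0$, note $C_0$ is closed and totally disconnected, observe that $F(t)$ nondegenerate forces $t\in C_0$, and use $f(t)<\min C_0$ to get $F^{-1}(C_0)\subseteq C_0$, hence $(F_i^n)^{-1}(C_0)\subseteq C_0$ is totally disconnected. The paper's write-up is terser (it dispatches the preimage containment in one line), but the content is identical.
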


\begin{proof}
Note $C_{0}$ is a closed totally disconnected set. If $F(t)$ is nondegenerate then $t\in  C_{0}$. Since $C_{0}\subseteq (0,1]$, $F^{-1}(C_{0})\subseteq C_{0}$. Thus $F^{-n}(C_{0})$ is totally disconnected for every $n$. So by Theorem \ref{Tree-like} $X$ is tree-like.
\end{proof}

\begin{proposition}  \label{c.k}
Let $K$ be a subcontinuum of $X$.
\begin{enumerate}
\item If $\overline{0} \in K$, then $K = \bigcup_{x \in K} L_{x}$.
\item If $\overline{0} \notin K$, then $\pi_n[K]$ is degenerate for cofinitely many $n$.
\end{enumerate}
\end{proposition}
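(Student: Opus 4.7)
My plan is to handle (1) and (2) separately; (1) follows from a short hereditary unicoherence argument, while (2) requires a compactness argument combined with Lemma~\ref{Tail} and continuity of $f$.

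For (1), the inclusion $K \subseteq \bigcup_{x \in K} L_{x}$ is immediate since $x \in L_{x}$. For the reverse, fix $x \in K$. The subcontinua $K$ and $L_{x}$ of $X$ share the points $x$ and $\overline{0}$ (using $\overline{0} \in K$), so by hereditary unicoherence of $X$ (which is tree-like), $K \cap L_{x}$ is itself a subcontinuum of the arc $L_{x}$ containing both of its endpoints. Consequently $K \cap L_{x} = L_{x}$, giving $L_{x} \subseteq K$.

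For (2), assume $\overline{0} \notin K$. For each $x \in K \subseteq X \setminus \{\overline{0}\}$, Lemma~\ref{Tail} supplies the unique integer $N(x) \in \omega$ satisfying $x_{n} \in C_{0}$ if and only if $n \geq N(x)$. It suffices to show $N^{*} := \sup_{x \in K} N(x) < \infty$: once this is in hand, for every $n \geq N^{*}$ the image $\pi_{n}[K]$ is a connected subset of the totally disconnected set $C_{0}$ and therefore degenerate.

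To prove $N^{*} < \infty$, suppose instead that $N(x^{(k)}) \to \infty$ along some sequence $\{x^{(k)}\} \subseteq K$. Compactness of $K$ yields a subsequence $x^{(k_{j})} \to x^{*} \in K$, and $\overline{0} \notin K$ forces $N(x^{*})$ to be finite. Fix any $m > N(x^{*})$; then $x^{*}_{m} \in C_{0}$, while for all large $j$ we have $N(x^{(k_{j})}) > m$, so $x^{(k_{j})}_{m} \notin C_{0}$ and hence $x^{(k_{j})}_{m-1} = f(x^{(k_{j})}_{m})$. Continuity of $f$ combined with coordinate-wise convergence yields $x^{*}_{m-1} = f(x^{*}_{m}) = 0$, using that $f$ vanishes on $C_{0}$. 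But $m - 1 \geq N(x^{*})$ forces $x^{*}_{m-1} \in C_{0} \subseteq (0,1]$, hence $x^{*}_{m-1} > 0$, a contradiction. I expect this final step to be the main obstacle: it depends essentially on the structural hypotheses $f|_{C_{0}} \equiv 0$ and $C_{0} \subseteq (0,1]$, which together translate continuity of the single-valued part of $F$ into the uniform tail-index bound on $K$ needed to conclude.
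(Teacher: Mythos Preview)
Your argument is correct. Part (1) is exactly the paper's proof: hereditary unicoherence forces $K\cap L_x$ to be a subcontinuum of the arc $L_x$ containing both endpoints, hence all of $L_x$.

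For part (2) you take a genuinely different route. The paper first shows $0\notin\pi_n[K]$ for cofinitely many $n$ (using $F(0)=\{0\}$ and compactness of $K$), and then argues geometrically: for such $n$ the two--coordinate projection $\pi_{[n-1,n]}[K]$ is a subcontinuum of $G(F)$ missing $[0,1]\times\{0\}$, which forces it to be a vertical segment, so $\pi_n[K]$ is a point. You instead uniformize Lemma~\ref{Tail} over $K$: assuming the tail indices $N(x)$ are unbounded, you extract a convergent subsequence and use continuity of $f$ together with $f|_{C_0}\equiv 0$ to force a limit coordinate to equal $0$ while simultaneously lying in $C_0\subseteq(0,1]$. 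Once $N^*=\sup_{x\in K}N(x)<\infty$, the inclusion $\pi_n[K]\subseteq C_0$ for $n\ge N^*$ and total disconnectedness of $C_0$ finish immediately. Your approach is a bit longer but is fully self--contained from the scalar hypotheses on $f$ and $C_0$; the paper's approach is shorter but rests on the component structure of $G(F)$ away from the horizontal axis, a step it leaves implicit.
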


\begin{proof}
First, suppose $\overline{0} \in K$. As $X$ is hereditarily unicoherent, for each $x \in K$, $L_{x} \cap K$ is a subcontinuum containing both $x$ and $\overline{0}$. As $L_{x}$ is an arc irreducible between $x$ and $\overline{0}$, $L_{x} \cap K=L_{x}$. Then $L_{x} \subseteq K$. So $K = \bigcup_{x \in K} L_{x}$.

Next, suppose $\overline{0}\notin K$. As $F(0)=\{0\}$ and $K$ is closed, $0\notin\pi_{n}[K]$ for cofinitely many $n$.  Then $\pi_{[n-1,n]}[K]$ is a subcontinuum of $G(F)$ that does not touch $[0,1]\times\{0\}$ for cofinitely many $n$. For each such $n$, $\pi_{[n-1,n]}[K]$ is a (possibly degenerate) vertical line segment and $\pi_{n}[K]$ contains a single point.
\end{proof}

%

\begin{theorem}\label{HereditarilyDecomposable}
$X$ is a hereditarily decomposable tree-like continuum.
\end{theorem}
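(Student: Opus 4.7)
My plan is to leverage two facts already established in the excerpt: $X$ is tree-like, hence hereditarily unicoherent, and $X$ is arcwise connected by Corollary \ref{ArcwiseConnected}. Together these make $X$ a dendroid. Since tree-likeness of $X$ has already been shown, hereditary decomposability is the only remaining claim. The approach is to prove (a) every subcontinuum of $X$ is itself a dendroid, and (b) every nondegenerate dendroid is decomposable.

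For (a), I would fix a subcontinuum $K$ of $X$ and distinct points $p, q \in K$. Hereditary unicoherence of $X$ makes the family of subcontinua of $X$ containing $\{p,q\}$ closed under binary intersection, hence downward directed, and a standard compactness argument then yields that its total intersection $I(p,q)$ is itself a subcontinuum containing $\{p,q\}$. Since $X$ is arcwise connected, there is an arc $A \subseteq X$ from $p$ to $q$; minimality forces $I(p,q) \subseteq A$, and since any subcontinuum of $A$ containing both of its endpoints must be all of $A$, we get $I(p,q) = A$. As $K$ is itself in the family over which we intersected, $I(p,q) \subseteq K$, so the arc $A$ lies in $K$. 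Thus $K$ is arcwise connected, and combined with the automatic inheritance of hereditary unicoherence, $K$ is a dendroid.

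For (b), let $K$ be a nondegenerate dendroid; fix $p \in K$ and, for each $q \in K \setminus \{p\}$, choose an arc $A_q \subseteq K$ from $p$ to $q$. If $A_q = K$ for some $q$, then $K$ is itself an arc and decomposes at any interior point. Otherwise each $A_q$ is a proper subcontinuum of $K$ through $p$, so the composant of $p$ in $K$ equals $K$, which rules out indecomposability. The step requiring the most care is establishing that $I(p,q)$ really is an arc in step (a); the pivotal observation is that a subcontinuum of an arc containing both of its endpoints must be the whole arc. Proposition \ref{c.k} is not strictly needed for this general argument, though it offers a more transparent route in the case $\overline{0} \in K$: the identity $K = \bigcup_{x \in K} L_x$ exhibits $K$ as a union of arcs through the common point $\overline{0}$, from which arcwise connectedness of $K$ is immediate.
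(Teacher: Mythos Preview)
Your argument is correct, but it takes a genuinely different route from the paper's. You exploit only the two global facts already established---tree-likeness (hence hereditary unicoherence) and arcwise connectedness---to conclude that $X$ is a dendroid, then prove from scratch the standard fact that every subcontinuum of a dendroid is again a dendroid (via the irreducible continuum $I(p,q)$ between two points, which must coincide with the arc joining them), and finally observe that any nondegenerate arcwise connected continuum is decomposable by the composant argument. The paper, by contrast, does not show that an arbitrary subcontinuum $K$ is arcwise connected. Instead it splits on whether $\overline{0}\in K$: for $\overline{0}\in K$ it uses Proposition~\ref{c.k}(1), $K=\bigcup_{x\in K}L_x$, and then the same composant argument you use; for $\overline{0}\notin K$ it invokes Proposition~\ref{c.k}(2) to see that $\pi_n[K]$ is eventually degenerate, locates the last nondegenerate coordinate, and exhibits an explicit separating point of $K$ lying over a point of $\pi_N[K]\setminus C_0$. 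Your approach is cleaner and would apply verbatim to any arcwise connected tree-like continuum, while the paper's approach is tied to the concrete structure of $X$ but yields more information in the $\overline{0}\notin K$ case---namely, an actual cut point rather than merely a decomposition.
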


\begin{proof}
Let $K$ be a nondegenerate subcontinuum of $X$. If $\overline{0}\in K$, then $K = \bigcup_{x\in K}L_{x}$ by the above proposition. If there is some $y$ such that $K=L_{y}$, then $K$ is an arc and thus decomposable. Otherwise, each $L_{x}$ is a proper subcontinuum of $K$. Then the composant of $\overline{0}$ in $K$ is $K$ itself, making $K$ decomposable.


Now suppose $\overline{0} \notin K$. By Proposition~\ref{c.k}, for cofinitely many $n$, $\pi_n[K]$ contains a single point, which we denote $k_n$. Since $K$ is nondegenerate, $\pi_{n}[K]$ is nondegenerate for some $n \in \mathbb N$.  Denote the largest such $n$ by $N$.  Then $\pi_{N}[K]$ contains a point $c$ in its interior such that $c\notin C_{0}$. Let $x\in\pi_{N}^{-1}(c)\cap K$. Then $x_{n}=k_{n}$ for $n>N$.
Since $c\notin C_{0}$, $F(c)=\{f(c)\}$. Since $f(t)<\min C_{0}$ for all $t\in[0,1]$, it follows that $x_{n}=f^{N-n}(c)$ for $n<N$. So $x=(f^N(c),\dots,f(c),c,k_{N+1},k_{N+2},\dots)$ is the unique point of $\pi_{N}^{-1}(c)\cap K$. As $c$ separates $\pi_{N}(K)$, $x$ is a separating point of $K$, and $K$ is decomposable.
\end{proof}

\end{document}